\documentclass[a4paper]{amsart}
\usepackage[active]{srcltx}
\usepackage[all]{xy}
\usepackage{subfig}
\usepackage{hyperref}
\usepackage{mathrsfs}

\setlength{\textwidth}{16cm}
\setlength{\textheight}{23cm}
\setlength{\oddsidemargin}{0pt}
\setlength{\evensidemargin}{0pt}

\usepackage{amsmath}
\usepackage{amssymb,latexsym}
\usepackage{mathrsfs}
\usepackage{graphics}
\usepackage{latexsym}
\usepackage{psfrag}
\usepackage{import}
\usepackage{verbatim}
\usepackage{graphicx}
\usepackage[usenames]{color}
\usepackage{pifont,marvosym}

\theoremstyle{plain}
\newtheorem{lemma}{Lemma}[section]
\newtheorem{theorem}[lemma]{Theorem}

\theoremstyle{definition}

\numberwithin{equation}{section}

\newcommand{\ve}{\varepsilon}

\newcommand{\erre}{\mathbb{R}}
\newcommand{\enne}{\mathbb{N}}

\begin{document}
\title[A note on a Residual subset of Lipschitz functions on metric spaces]{A note on a Residual subset of \\ Lipschitz functions on metric spaces}

\author{Fabio Cavalletti}

\address{RWTH, Department of Mathematics, Templergraben  64, D-52062 Aachen (Germany)}
\email{cavalletti@instmath.rwth-aachen.de}

\begin{abstract}
Let $(X,d)$ be a quasi-convex, complete and separable metric space with reference probability measure $m$.
We prove that the set of of real valued Lipschitz function with non zero point-wise Lipschitz constant $m$-almost everywhere
is residual, and hence dense, in the Banach space of Lipschitz and bounded functions.
The result is the metric analogous of a result proved 
for real valued Lipschitz maps defined on $\mathbb{R}^{2}$ by Alberti, Bianchini and Crippa in \cite{abc:sard}. 
\end{abstract}

\maketitle

\textbf{Mathematics Subject Classification:} 53C23, 	30Lxx


\bibliographystyle{plain}

\section{Introduction}

In the context of metric spaces, say $(X,d)$, it is possible to look at the point-wise variation of a real valued map considering
\begin{equation}\label{E:Lipoint}
\textrm{Lip}\, f(x) : = \limsup_{y\to x, y \neq x} \frac{|f(x) - f(y)|}{ d(x,y)},
\end{equation}
that is called \emph{point-wise Lipschitz constant}. 
In the smooth framework $\textrm{Lip} \, f$ corresponds to the modulus of $\nabla f$: if $(X,d)$ is an 
open subset of $\erre^{d}$ endowed with the euclidean norm and $f$ is locally Lipschitz, then 
$\textrm{Lip}\, f  = |\nabla f|$ almost everywhere with respect to the Lebesgue measure.
Or more in general if $(X,d,m)$ is a metric measure space admitting a differentiable structure in the sense of Cheeger, 
see \cite{cheeger:lip}, \cite{kleinermac:measurelip} for the definitions, and $f$ is Lipschitz, then $\textrm{Lip}\, f  = | d f|$ $m$-a.e. 
where $df$ is the Cheeger's differential of $f$.

Once a point-wise information is given we are interested at looking at those points where the ``differential'' vanishes:
define the singular set of $f$ as follows
\[
S(f) : = \{ x \in X : \textrm{Lip}\, f(x) = 0\}.
\]

The classical Sard's Theorem states that if $f : \erre^{n} \to \erre$ is sufficiently smooth then the Lebesgue measure of $f(S(f))$ is 0.
As soon as the regularity assumption on $f$ is dropped, the conclusion of Sard's Theorem does not hold anymore and one may look for weaker property to hold.

%

The question is if it is possible to approximate any Lipschitz function with functions having 
negligible $S(f)$ with respect to a given reference measure. 

For real valued Lipschitz functions defined on $\erre^{2}$, with Lebesgue measure playing the role of the reference measure, a positive answer is contained in \cite{abc:sard}, see Proposition 4.10.
We prove the following.

\begin{theorem}\label{T:main}
Assume $(X,d)$ is a quasi-convex, complete and separable metric space and let $m$ be a Borel probability measure over it. 
The set of those $f \in D^{\infty}(X)$ so that $m(S(f)) =0$ is residual, and therefore dense, in $D^{\infty}(X)$. 
\end{theorem}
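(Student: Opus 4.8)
The plan is to exhibit $G:=\{f\in D^{\infty}(X):m(S(f))=0\}$ as a dense $G_\delta$ in the Banach space $D^{\infty}(X)$ and then invoke the Baire category theorem. The one elementary fact that drives everything is that $f\mapsto \textrm{Lip}\,f(x)$ is $1$-Lipschitz for the global Lipschitz seminorm, uniformly in $x$: since $\textrm{Lip}(\cdot)(x)$ is subadditive and even (as a $\limsup$ of quotients), the triangle inequality gives, for every $x$,
\[
|\textrm{Lip}\,f(x)-\textrm{Lip}\,g(x)|\le \textrm{Lip}(f-g)(x)\le \textrm{Lip}(f-g),
\]
and, more usefully, the reverse estimate $\textrm{Lip}(f+h)(x)\ge |\textrm{Lip}\,f(x)-\textrm{Lip}\,h(x)|$. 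I would also note in passing that $\textrm{Lip}\,f$ is Borel, so all the sets below are $m$-measurable.

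First I would reduce the statement to density plus a $G_\delta$ structure. Writing $\{\textrm{Lip}\,f>0\}=\bigcup_n\{\textrm{Lip}\,f>1/n\}$ and using that $m$ is a probability, one checks $m(S(f))=0$ iff for every $k$ there is $n$ with $m(\{\textrm{Lip}\,f>1/n\})>1-1/k$; hence $G=\bigcap_k B_k$ where $B_k:=\bigcup_n\{f:m(\{\textrm{Lip}\,f>1/n\})>1-1/k\}$. Each constituent $\{f:m(\{\textrm{Lip}\,f>1/n\})>1-1/k\}$ is open: if $f$ lies in it and $\textrm{Lip}(f-g)<\eta$, then the displayed estimate yields $\{\textrm{Lip}\,g>1/n\}\supseteq\{\textrm{Lip}\,f>1/n+\eta\}$, and by continuity of $m$ from below along the increasing family $\{\textrm{Lip}\,f>1/n+\eta\}$ one may choose $\eta$ so small that this measure still exceeds $1-1/k$. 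Thus every $B_k$ is open and $G$ is a $G_\delta$; it only remains to prove $G$ is dense, since then each $B_k\supseteq G$ is dense and Baire applies.

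For density I would perturb along a one-parameter family and integrate. Suppose we are given a bounded Lipschitz $u$ with $\textrm{Lip}\,u(x)>0$ for $m$-a.e.\ $x$. Given $f$ and $\varepsilon>0$, set $g_t=f+tu$. The reverse estimate gives $\textrm{Lip}\,g_t(x)\ge|\textrm{Lip}\,f(x)-t\,\textrm{Lip}\,u(x)|$, which is strictly positive unless $t=\textrm{Lip}\,f(x)/\textrm{Lip}\,u(x)$; for each fixed $x$ with $\textrm{Lip}\,u(x)>0$ this happens for at most one value of $t$, so by Tonelli the set of pairs $(x,t)$ where it fails has $(m\otimes\mathcal{L}^1)$-measure zero in $X\times(0,\delta)$. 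Consequently $m(S(g_t))=0$ for Lebesgue-a.e.\ $t\in(0,\delta)$, while $\|g_t-f\|_{D^{\infty}}=t\,\|u\|_{D^{\infty}}\to0$; choosing a good $t$ small enough places $g_t\in G$ within $\varepsilon$ of $f$.

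The crux, and the only place quasi-convexity is used, is the construction of such a $u$; without it the statement is false (on a snowflaked line every Lipschitz function has $\textrm{Lip}\equiv0$, so $G=\emptyset$). My plan is to pass to the induced length distance $d_\ell$, for which quasi-convexity means exactly $d\le d_\ell\le C\,d$. Since $d\le d_\ell$ one has $\textrm{Lip}_d\,w(x)\ge \textrm{Lip}_{d_\ell}\,w(x)$ pointwise, and in a length space $\textrm{Lip}_{d_\ell}\,d_\ell(\cdot,x_0)(x)=1$ for every $x\ne x_0$ (approach $x$ along a near-minimizing curve to $x_0$); hence $\textrm{Lip}_d\,d_\ell(\cdot,x_0)(x)\ge1$ off $\{x_0\}$. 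Composing with a bounded, strictly increasing $\phi\in C^1$ (say $\phi=\arctan$) and using the chain rule $\textrm{Lip}(\phi\circ v)(x)=\phi'(v(x))\,\textrm{Lip}\,v(x)$, the function $u:=\phi\big(d_\ell(\cdot,x_0)\big)$ is bounded, $d$-Lipschitz (as $d_\ell(\cdot,x_0)$ is $C$-Lipschitz for $d$), and satisfies $\textrm{Lip}\,u(x)\ge\phi'(d_\ell(x,x_0))>0$ for all $x\ne x_0$. The main obstacle I anticipate is precisely this step: making the length-metric comparison and the chain rule rigorous for the point-wise Lipschitz constant in a space that is only quasi-convex, and arranging that the residual zero-slope set is $m$-null (e.g.\ choosing $x_0$ outside the at most countable set of atoms of $m$).
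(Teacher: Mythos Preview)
Your argument is correct and takes a genuinely different route from the paper's. Both proofs write $G$ as a countable intersection of open sets and reduce the matter to a density statement, and both lean on the quasi-convexity only through Lemma~\ref{L:auxiliary} (the length-distance function has pointwise Lipschitz constant bounded below by $1$). The divergence is in the density step. The paper, given $f$ and a tolerance $r$, builds a perturbation \emph{tailored to $f$}: it takes an $\varepsilon$-neighbourhood $S(f)^{\varepsilon}$, sets $\hat g(x)=d_L(x,X\setminus S(f)^{\varepsilon})$, and adds a small multiple of $\hat g$ to $f$; this forces $\textrm{Lip}\,(f+c\hat g)>0$ on $S(f)$ while leaving it positive on the interior of the complement, so that $S(f+c\hat g)$ is trapped in the thin shell $S(f)^{\varepsilon}\setminus S(f)$, whose measure is then made small by choosing $\varepsilon$ carefully (and after reducing to $S(f)$ closed and bounded). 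Your argument instead fixes once and for all a single ``universal'' perturbation $u=\phi(d_L(\cdot,x_0))$ with $\textrm{Lip}\,u>0$ everywhere, forms the pencil $g_t=f+tu$, and uses the reverse triangle inequality $\textrm{Lip}\,g_t(x)\ge |\textrm{Lip}\,f(x)-t\,\textrm{Lip}\,u(x)|$ together with Tonelli to conclude that $m(S(g_t))=0$ for Lebesgue-a.e.\ $t$.

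What each approach buys: the paper's construction is entirely deterministic and lands directly in the (larger) set $G_r$, but it requires several auxiliary reductions (finite diameter, closedness of $S(f)$, a good choice of $\varepsilon$ so that the boundary shell is $m$-null). Your Fubini argument is shorter and lands directly in $G$ itself, avoiding those reductions; the small prices are that you must check joint measurability of $(x,t)\mapsto \textrm{Lip}\,g_t(x)$ for Tonelli (straightforward from separability, since the $\limsup$ can be computed over a countable dense set of $y$'s), and justify the chain rule $\textrm{Lip}(\phi\circ v)(x)=|\phi'(v(x))|\,\textrm{Lip}\,v(x)$ for $C^1$ monotone $\phi$ (immediate from the mean value theorem). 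Incidentally, the atom-avoiding caveat for $x_0$ is unnecessary: the same near-geodesic argument shows $\textrm{Lip}_d\,d_L(\cdot,x_0)(x_0)\ge 1$ as well, so $\textrm{Lip}\,u>0$ at \emph{every} point.
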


The Banach space $D^{\infty}(X)$ will be the space of bounded functions with bounded point-wise Lipschitz constant, 
endowed with the uniform norm. See below for a precise definition.
Recall that a set in a topological space is residual if it contains a countable intersection of open dense set. 
By Baire Theorem, a residual set in a complete metric space is dense.

\section{Setting}

Let $(X,d)$ be a metric space and $m$ is a Borel probability measure over $X$ so that $X$ coincides with its support.
For $f : X \to \erre$ the \emph{Lipschitz constant} of f is defined as usual by 
\[
\textrm{LIP}(f) : = \sup_{ x,y \in X,   x\neq y} \frac{|f(x)-f(y)|}{d(x,y)},
\]
and we say that $f$ is Lipschitz if $\textrm{LIP}(f)$ is a finite number.
Accordingly denote by $\textrm{LIP}^{\infty}(X)$ the space of bounded Lipschitz functions. 
The natural norm on $\textrm{LIP}^{\infty}(X)$ is given by 
\[
\| f\|_{\textrm{LIP}^{\infty}(X)} = \| f \|_{\infty} + \textrm{LIP}(f),
\]
where $\| \cdot \|_{\infty}$ is the uniform norm. The space of bounded Lipschitz functions endowed with 
$\| f\|_{\textrm{LIP}^{\infty}(X)}$ turns out to be a Banach space.
The point-wise version of $\textrm{LIP}(f)$ is given by the point-wise Lipschitz constant as defined in \ref{E:Lipoint}.
The corresponding space of bounded functions with bounded point-wise Lipschitz constant can be considered:
\[
D^{\infty}(X)  : = \{ f : X \to \erre : \| f \|_{\infty} + \| \textrm{Lip}\, f \|_{\infty} < \infty \}.
\]
A study of $D^{\infty}(X)$ and $\textrm{LIP}^{\infty}(X)$ can be found 
in \cite{durand:pointwiselip}. The following results are taken from \cite{durand:pointwiselip}.

It is straightforward to note that $\textrm{LIP}^{\infty}(X) \subset D^{\infty}(X)$ and 
for a general metric space this is the only valid inclusion. 
Examples of metric spaces and functions in $D^{\infty}(X)$ not satisfying a global Lipschitz bound can be constructed, 
see \cite{durand:pointwiselip}.
If $(X,d)$ is quasi-convex also the other inclusion holds and $\textrm{LIP}^{\infty}(X) = D^{\infty}(X)$ and 
the two semi-norms are comparable: there exists $C \geq 1$ so that 
\[
\| \textrm{Lip}\, f \|_{\infty } \leq \textrm{LIP}(f)  \leq C \| \textrm{Lip}\, f \|_{\infty }.
\]
Hence $D^{\infty}(X)$, or equivalently $\textrm{LIP}^{\infty}(X)$, 
endowed with the norm $\| \cdot \|_{\infty} + \| \textrm{Lip}\, (\cdot) \|_{\infty }$ is a Banach space. We will denote this norm with $\| \cdot\|_{D^{\infty}}(X)$.

Recall that a metric space $(X,d)$ is quasi-convex if there exists a constant $C\geq1$ such that for each pair of points $x,y \in X$
there exists a curve $\gamma$ connecting the two points such that  $l(\gamma) \leq C d(x,y)$, 
where $l(\gamma)$ denotes the length of $\gamma$ defined with the usual ``affine'' approximation: 
for $\gamma : [a,b] \to X$ its length $l(\gamma)$ is defined by
\[
l(\gamma) : = \sup \left\{ \sum_{i =1 }^{n} d(x_{i},x_{i+1}) : a = x_{1} < x_{2} < \dots < x_{n+1} = b, n\in \enne \right\}.
\]
Associated to the length $l(\gamma)$ there is the distance obtained minimizing it: 
\[
d_{L}(x,y) = \inf \{ l(\gamma) : \gamma_{0} = x, \gamma_{1} = y \}. 
\]
The function $d_{L}$ is indeed a distance on each component of accessibility by rectifiable paths, i. e. those paths having finite 
$l$. By quasi-convexity it follows that 
\[
d(x,y) \leq d_{L}(x,y) \leq C d(x,y), 
\]
with $C>1$. Hence $(X,d_{L})$ is a complete and separable metric space that is also a length space.
Clearly $(X,d_{L})$ has the same open sets of $(X,d)$. For a more detailed discussion on length spaces see \cite{burago}.

We will use the following notation. 
For $r>0$ and $z \in X$, we will denote with $B_{r}(z)$ the ball of radius $r$ centered in $z$. 
The complement in $X$ of a set $A$ will be denoted by $A^{c}$ and $\partial A$ denotes the topological boundary of $A$. 
The closure of $A$ is $cl(A)$ and the interior part $int(A)$.
Associated to a set we can consider the distance from it:
for $x \in X$ and $A \subset X$
\[
d(x,A) : = \inf_{w \in A} d(x,w).
\]

\section{The Result}

\begin{lemma}\label{L:measurable}
For any Borel function $f : X \to \erre$, the function $\emph{Lip}\, f : X \to \bar \erre$ is universally measurable.
\end{lemma}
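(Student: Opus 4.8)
The plan is to express $\textrm{Lip}\, f$ as a countable infimum of auxiliary functions whose superlevel sets are projections of Borel sets, and then to invoke the classical fact that analytic sets in a Polish space are universally measurable. The point of departure is that $(X,d)$, being complete and separable, is a Polish space, and so is the product $X \times X$; this is what will make the descriptive-set-theoretic machinery available.

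First I would rewrite the defining $\limsup$ in a more tractable form. For $r>0$ set
\[
g_r(x) := \sup_{0 < d(x,y) < r} \frac{|f(x) - f(y)|}{d(x,y)}.
\]
Since the supremum is taken over a shrinking family of punctured balls as $r \downarrow 0$, the map $r \mapsto g_r(x)$ is nondecreasing, so that
\[
\textrm{Lip}\, f(x) = \inf_{r > 0} g_r(x) = \inf_{n \in \enne} g_{1/n}(x).
\]
It therefore suffices to prove that each $g_{1/n}$ is universally measurable, because a countable infimum of universally measurable functions is again universally measurable (for every Borel measure $m$ the completed $\sigma$-algebra is stable under countable operations).

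Next I would analyze the superlevel sets of $g_r$. Define $F$ on the open set $\{(x,y) \in X\times X : d(x,y) > 0\}$ by $F(x,y) := |f(x)-f(y)|/d(x,y)$. Because $f$ is Borel, $d$ is continuous, and the two coordinate projections are continuous, $F$ is a Borel function there. Fix $t \in \erre$. Then
\[
\{ x \in X : g_r(x) > t \} = \pi_1\bigl( \{(x,y) : 0 < d(x,y) < r, \ F(x,y) > t \} \bigr),
\]
where $\pi_1$ denotes projection onto the first factor, and the set inside the projection is Borel in $X \times X$.

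The key step, which is also the main obstacle, is the measurability of this projection. By the projection theorem the image of a Borel subset of the Polish space $X \times X$ under $\pi_1$ is analytic (Suslin), and by Lusin's theorem every analytic subset of a Polish space is universally measurable. Hence $\{ g_r > t \}$ is universally measurable for every $t$, so $g_r$ is universally measurable, and combining this with the reduction above yields that $\textrm{Lip}\, f = \inf_n g_{1/n}$ is universally measurable. I expect the whole subtlety to lie precisely in the uncountable supremum defining $g_r$: one cannot in general replace it by a supremum over a countable dense set, since $f$ is merely Borel and need not be continuous, so a direct separability argument fails and genuine descriptive-set-theoretic tools are needed.
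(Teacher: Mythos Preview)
Your proof is correct and follows essentially the same strategy as the paper: express the relevant level sets of $\textrm{Lip}\, f$ as projections onto the first factor of Borel subsets of $X\times X$, conclude they are analytic (Souslin), and hence universally measurable. Your version is in fact slightly more careful, since working with the strict superlevel sets $\{g_r>t\}$ makes the identification with a single projection exact; the paper's displayed formula places the countable intersection \emph{inside} the projection, which as written would yield the empty set, and what is intended is the countable intersection of the projections.
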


\begin{proof}
In order to prove the claim we just have to show that the set $\{ x\in X :  \textrm{Lip}\, f (x) \geq a\}$ is Souslin for any $a \in \erre$.
Since $f$ is a Borel map then
\[
\bigcap_{n \in \enne } \left\{ (x,y) \in X \times X :  0< d(x,y) \leq \frac{1}{n}, \  \frac{|f(x) - f(y)|}{ d(x,y)} \geq a\right\}
\]
is a Borel set. Note that 
\[
\{ x\in X :  \textrm{Lip}\, f (x)\geq a\}  = 
P_{1} \left( \bigcap_{n \in \enne } \left\{ (x,y) \in X \times X :  0< d(x,y) \leq \frac{1}{n}, \  \frac{|f(x) - f(y)|}{ d(x,y)} \geq a\right\} \right),
\]
where $P_{1} : X \times X \to X$ denotes the projection on the first element. It follows from the definition of Souslin set 
that $\{ x\in X :  \textrm{Lip}\, f (x)\geq a\}$ is Souslin and the claim follows. 
\end{proof}

Then after Lemma \ref{L:measurable} it makes sense to look at those functions $f$ so that $m(S(f))=0$.
We will need the following

\begin{lemma}\label{L:auxiliary}
Let $K \subset X$ be a closed set and consider the length distance function from $K$ that is $g(x) : = d_{L}(x,K)$.
Then
\[
1 \leq \emph{Lip}\, g(x) \leq C, \qquad  \textrm{for } x \in K^{c},
\]
\end{lemma}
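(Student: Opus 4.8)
The plan is to treat the two inequalities separately, since they are of opposite nature: the upper bound is a soft consequence of quasi-convexity, while the lower bound requires producing explicit competitor points along almost-minimizing paths toward $K$.

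For the upper bound I would first record that $g$ is globally $1$-Lipschitz with respect to the length distance: for any $x,y$ the triangle inequality for $d_{L}$ gives $|d_{L}(x,K) - d_{L}(y,K)| \leq d_{L}(x,y)$. Here I use that quasi-convexity forces $d_{L}(x,z) \leq C d(x,z) < \infty$ for every $z$, so all the quantities involved are finite and $d_{L}$ is a genuine metric on $X$. Combining this with the quasi-convexity estimate $d_{L}(x,y) \leq C d(x,y)$ yields $|g(x) - g(y)| \leq C d(x,y)$, hence $\textrm{Lip}\, g(x) \leq C$ at every point, in particular on $K^{c}$.

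For the lower bound, fix $x \in K^{c}$. Since $K$ is closed and $d \leq d_{L}$, we have $g(x) = d_{L}(x,K) \geq d(x,K) > 0$. The idea is that $g$ decreases at unit $d_{L}$-speed along an almost-minimizing path toward $K$, while replacing $d_{L}$ by $d$ in the denominator of the difference quotient can only increase it. Concretely, for small $\ve > 0$ I would pick $z \in K$ with $d_{L}(x,z) \leq g(x) + \ve/2$ and a curve $\gamma$ from $x$ to $z$ with $l(\gamma) \leq g(x) + \ve$, reparametrized by arc length (a finite-length curve always admits such a reparametrization, $1$-Lipschitz for $d_{L}$). For a point $y = \gamma(\ell)$ at arc length $\ell < g(x)$, the tail of $\gamma$ joins $y$ to $z \in K$ and shows $g(y) \leq l(\gamma) - \ell \leq g(x) + \ve - \ell$, whence $g(x) - g(y) \geq \ell - \ve$; the initial arc gives $d(x,y) \leq d_{L}(x,y) \leq \ell$. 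Thus the difference quotient is at least $(\ell - \ve)/\ell$, and taking a diagonal sequence $\ell_{n} \to 0$ with $\ve_{n} = \ell_{n}/n$ produces points $y_{n}$ whose quotient is at least $1 - 1/n$, giving $\textrm{Lip}\, g(x) \geq 1$.

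The main point to handle with care is that each $y_{n}$ must be admissible in the $\limsup$ defining $\textrm{Lip}\, g(x)$, namely $y_{n} \neq x$, while simultaneously $y_{n} \to x$. Both are automatic in the scheme above: $d(x,y_{n}) \leq \ell_{n} \to 0$ forces $y_{n} \to x$, and the strict positivity $g(x) - g(y_{n}) \geq \ell_{n} - \ve_{n} > 0$ (for $n \geq 2$) forces $g(y_{n}) \neq g(x)$ and hence $y_{n} \neq x$. A near-geodesic may of course wiggle, so that $d(x,y_{n})$ is much smaller than the arc length $\ell_{n}$; but this only enlarges the quotient, so it works in our favour and does not threaten the value $1$. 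The interplay between the two metrics, length in the numerator and ambient distance in the denominator, is precisely what pins the lower constant at $1$ rather than at $1/C$.
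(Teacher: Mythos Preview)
Your argument is correct and follows essentially the same route as the paper: the upper bound via the triangle inequality for $d_{L}$ together with $d_{L}\leq C d$, and the lower bound by producing competitor points along almost-minimizing curves toward $K$ and exploiting $d\leq d_{L}$ in the denominator. The only cosmetic difference is that the paper first proves $\textrm{Lip}_{d_{L}} g(x)=1$ in the length-space case and then transfers both bounds at once via $d\leq d_{L}\leq C d$, whereas you run the two inequalities directly with $d$ in the denominator; the underlying estimates are the same.
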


\begin{proof} 
{\it Step 1.} Assume that $d = d_{L}$ so that $(X,d)$ is also a length space and $g = d(x,K)$. 
Then fix $x \in K^{c}$: for any $z \in K$ and $y \in K^{c}$  it holds
\[
d(x,z) - d(y,z) \leq d(x,y)
\]
hence trivially $\textrm{Lip}\, g(x) \leq 1$. 

Consider now a minimizing sequence $z_{n} \in K$ for $x$, that is  that 
$g(x) \geq d(x,z_{n}) - 1/n$. From the length structure it follows that for any $n$ there exists $\gamma^{n} : [0,1] \to X$ rectifiable 
curve starting in $x$ and arriving in $z_{n}$ so that $d(x,z_{n}) \geq l(\gamma^{n}) - 1/n$. So for any $y_{n}$ in the image of $\gamma^{n}$
\[
\frac{ g(x) - g(y_{n})}{d(x,y_{n})} \geq \frac{ l(\gamma_{n}) - d(y_{n},z_{n}) - 2/n }{d(x,y_{n})}.
\]
Since $l(\gamma^{n}) \geq d(x,y_{n}) + d(y_{n},z_{n})$ it follows that 
\[
\frac{ g(x) - g(y_{n})}{d(x,y_{n})} \geq \frac{  d(x,y_{n}) - 2/n  }{d(x,y_{n})}.
\]
Since the only constrain on $y_{n}$ was to belong to the image of $\gamma^{n}$, we can choose $y_{n}$ so that the previous ratio 
converges to $1$. Hence $\textrm{Lip} \, g (x) = 1$.

{\it Step 2.} We now drop the assumption on the length structure of the space. Let $(X,d)$ be quasi-convex and $g(x) = d_{L}(x,K)$. Since $(X,d_{L})$ is a length space for any $x \in K^{c}$
\[
\limsup_{y\to x, y \neq x} \frac{|g(x) - g(y)|}{d_{L}(x,y)} = 1.
\]
Having $(X,d_{L})$ and $(X,d)$ the same open set, $K^{c}$ does not depend on the metric. Since $d \leq d_{L} \leq C d$
the claim follows.
\end{proof}

We can now prove Theorem \ref{T:main}. The proof uses now the ideas contained in Proposition 4.10 in \cite{abc:sard}.

\begin{theorem}
Assume $(X,d)$ is a quasi-convex, complete and separable space and let $m$ be a Borel probability measure over it. 
The set of those $f \in D^{\infty}(X)$ so that $m(S(f)) =0$ is residual in $D^{\infty}(X)$ 
and therefore dense.
\end{theorem}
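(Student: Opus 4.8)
The plan is to realize the good set as a countable intersection of open dense subsets of $D^{\infty}(X)$ and then to invoke Baire's theorem (legitimate since $D^{\infty}(X)$ is a complete metric space). For $\delta>0$ put
\[
A_{\delta} := \{ f \in D^{\infty}(X) : m(S(f)) < \delta \},
\]
so that $\{ f : m(S(f)) = 0\} = \bigcap_{n\in\enne} A_{1/n}$. All the occurring sets are universally measurable by Lemma \ref{L:measurable}, so it suffices to prove that each $A_{\delta}$ is open and dense.

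Openness comes for free from the fact that the $D^{\infty}$-topology controls $\textrm{Lip}$ \emph{uniformly}. From the subadditivity $\textrm{Lip}(u+v)(x)\le \textrm{Lip}\,u(x)+\textrm{Lip}\,v(x)$ one gets $|\textrm{Lip}\,f(x)-\textrm{Lip}\,g(x)|\le \textrm{Lip}(f-g)(x)\le \|\textrm{Lip}(f-g)\|_{\infty}$ for every $x$, so $f_{k}\to f$ in $D^{\infty}(X)$ forces $\textrm{Lip}\,f_{k}\to\textrm{Lip}\,f$ uniformly. Writing $\ve_{k}:=\|\textrm{Lip}(f_{k}-f)\|_{\infty}\to0$, any $x\in S(f_{k})$ satisfies $\textrm{Lip}\,f(x)\le\textrm{Lip}\,f_{k}(x)+\ve_{k}=\ve_{k}$, hence $S(f_{k})\subseteq\{\textrm{Lip}\,f\le\ve_{k}\}$ and $m(S(f_{k}))\le m(\{\textrm{Lip}\,f\le\ve_{k}\})$. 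Since $\{\textrm{Lip}\,f\le\ve\}\downarrow S(f)$ as $\ve\downarrow0$ and $m$ is finite, continuity of measure from above gives $\limsup_{k}m(S(f_{k}))\le m(S(f))$. Thus $f\mapsto m(S(f))$ is upper semicontinuous and $A_{\delta}=\{m(S(\cdot))<\delta\}$ is open.

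For density I would perturb an arbitrary $f\in D^{\infty}(X)$ by a small multiple of a truncated length-distance function, splitting the target $\delta$ into three parts. Fix $\ve,\delta>0$, choose a point $p$ with $m(\{p\})<\delta/3$ (the atoms of $m$ being at most countably many), set $g_{0}:=d_{L}(\cdot,p)$ and, as $X$ may be unbounded, $g:=\min\{g_{0},M\}$ for $M$ large; by Lemma \ref{L:auxiliary} one has $\textrm{Lip}\,g=\textrm{Lip}\,g_{0}\ge1$ on $\{0<g_{0}<M\}$ while $\|\textrm{Lip}\,g\|_{\infty}\le C$ and $\|g\|_{\infty}\le M$. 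The elementary reverse estimate $\textrm{Lip}(f+tg)(x)\ge t\,\textrm{Lip}\,g(x)-\textrm{Lip}\,f(x)$ (tested along a maximizing sequence for $g$) together with the subadditive bound $\textrm{Lip}\,f(x)\le\textrm{Lip}(f+tg)(x)+t\,\textrm{Lip}\,g(x)$ force any point of $S(f+tg)$ lying in $\{0<g_{0}<M\}$ to satisfy $t\le\textrm{Lip}\,f(x)\le tC$; therefore
\[
S(f+tg)\subseteq \{\,t\le\textrm{Lip}\,f\le tC\,\}\cup\{p\}\cup\{\,g_{0}\ge M\,\}.
\]
Since $d_{L}(\cdot,p)\le C\,d(\cdot,p)<\infty$ by quasi-convexity, $m(\{g_{0}\ge M\})\to0$, so $M$ can be fixed with $m(\{g_{0}\ge M\})<\delta/3$.

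It remains to make the shell small while keeping the perturbation small. Along the geometric scales $t_{j}:=\lambda^{j}$, $\lambda\in(0,1)$, each $x$ with $\textrm{Lip}\,f(x)>0$ lies in $\{t_{j}\le\textrm{Lip}\,f\le t_{j}C\}$ for at most $N:=\lfloor \log C/\log(1/\lambda)\rfloor+1$ indices, whence
\[
\sum_{j} m\big(\{\,t_{j}\le\textrm{Lip}\,f\le t_{j}C\,\}\big)\le N\, m(\{\textrm{Lip}\,f>0\})\le N<\infty,
\]
so the general term tends to $0$. Picking $j$ large enough that this shell has measure $<\delta/3$ and simultaneously $t_{j}(M+C)<\ve$, the function $t_{j}g$ has $D^{\infty}$-norm below $\ve$ and $m(S(f+t_{j}g))<\delta$; hence $A_{\delta}$ is dense. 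I expect this density step to be the main obstacle: openness is essentially automatic once one observes the uniform control of $\textrm{Lip}$ by the $D^{\infty}$-norm, whereas confining the singular set to a thin shell and exploiting the bounded overlap of the shells at geometric scales is where Lemma \ref{L:auxiliary} and quasi-convexity genuinely enter.
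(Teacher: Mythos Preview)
Your argument is correct. The openness step coincides with the paper's: both boil down to the inclusion $S(g)\subset\{\textrm{Lip}\,f\le\delta\}$ whenever $\|f-g\|_{D^{\infty}(X)}\le\delta$, which you phrase as upper semicontinuity of $f\mapsto m(S(f))$.

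The density step, however, follows a genuinely different route. The paper perturbs $f$ by a small multiple of $\hat g(x)=d_{L}(x,K)$ where $K$ is the \emph{complement} of an $\varepsilon$-neighbourhood $S(f)^{\varepsilon}$ of the singular set; since $\textrm{Lip}\,\hat g\ge 1$ on $S(f)^{\varepsilon}$ and $\hat g\equiv 0$ on $\mathrm{int}(K)$, the new singular set is trapped in the annulus $S(f)^{\varepsilon}\setminus S(f)$ (up to a boundary of measure zero, obtained by choosing $\varepsilon$ outside a countable bad set), and one concludes by outer regularity of $m$ at $S(f)$. Your perturbation is instead the truncated distance to a \emph{single point} $p$; the key containment $S(f+tg)\subset\{t\le\textrm{Lip}\,f\le tC\}\cup\{p\}\cup\{g_{0}\ge M\}$ is different in nature, and you kill the shell by the Borel--Cantelli type bounded-overlap estimate along geometric scales $t_{j}=\lambda^{j}$. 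What the paper's approach buys is a perturbation supported near $S(f)$, so that outside $S(f)^{\varepsilon}$ the function is literally unchanged; the price is some bookkeeping (reducing to $S(f)$ closed and of finite diameter, and selecting $\varepsilon$ with null boundary). What your approach buys is a cleaner, more self-contained argument that avoids those reductions entirely; the price is that the perturbation is global and one needs the overlap trick to find a good scale. Both rely on Lemma~\ref{L:auxiliary} and on quasi-convexity in the same essential way, namely to guarantee $\textrm{Lip}\,d_{L}(\cdot,K)\ge 1$ off $K$ while keeping the Lipschitz constant bounded by $C$.
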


\begin{proof} 
Consider the following sets 
\[
G : = \{ f \in D^{\infty}(X) :  m(S(f)) = 0\}, \qquad   G_{r} : = \{ f \in D^{\infty}(X) :  m(S(f)) < r \}.
\]
The claim is then to prove that $G$ is a residual set. Since $G = \cap G_{r}$, where the intersection runs over a sequence of $r$ converging to 0, the claim is proved once 
it is proved that each $G_{r}$ is open and dense in $D^{\infty}(X)$.

{\it Step 1.} The set $G_{r}$ is open in $D^{\infty}(X)$. Fix $f \in G_{r}$. Then there exists $\delta > 0$ so that 
\[
m\left(\{x\in X : \textrm{Lip}\, f(x) \leq \delta  \}\right) < r.
\]
Since for any $g \in D^{\infty}(X)$ it holds that 
\[
\textrm{Lip}\, f (x)  \leq  \textrm{Lip}\, g (x) + \textrm{Lip}\, (f -g)(x), 
\]
for any $g \in D^{\infty}(X)$ so that $\| g - f \|_{D^{\infty}}(X) \leq \delta$ it holds that 
\[
S(g) \subset \{x \in X : \textrm{Lip}\, f(x) \leq \delta \},
\]
and therefore $m(S(g)) < r$ and consequently $g \in G_{r}$.

{\it Step 2.} The set $G_{r}$ is dense in $D^{\infty}(X)$. Given $f \in D^{\infty}(X)$ and $\delta > 0$ we have to find $g \in G_{r}$ so that 
$\| f - g \|_{D^{\infty}(X)} \leq \delta$. Without loss of generality we can assume $m(S(f)) \geq r$.

For every $\ve > 0$ denote with $S(f)^{\ve}$ the $\ve$-neighborhood of the set of singular points of $f$, i.e. 
\[
S(f)^{\ve} = \{ z\in X : d(z, S(f)) < \ve \}.
\] 
The set $S(f)^{\ve}$ is open and denote by $K$ its complementary in $X$. Associated to $K$ we consider the distance function $\hat g$ as defined in 
Lemma \ref{L:auxiliary} that is $\hat g(x) : = d_{L}(x,K)$. 
A rough bound on $\hat g(x)$ can be given in terms of the ``diameter'' of $S(f)$:
\[
\hat g(x) \leq C \sup\{ d(x,z) : cl(S(f)^{\ve}) \},
\]
where $cl(S(f)^{\ve})$ stands for the closure of $S(f)^{\ve}$. Since to approximate with functions in $G_{r}$ we can 
make an error in measure strictly less than $r$ and since $m$ is a probability measure, we can assume $S(f)$ to have finite diameter and by inner regularity we can even assume it to be closed.
Therefore 
\[
\|\hat  g \|_{\infty} \leq M, \qquad M > 0.
\]
From Lemma \ref{L:auxiliary} we have $\textrm{Lip}\, \hat g (x) > 0$ for $x \in S(f)^{\ve}$ and  clearly $\textrm{Lip}\, \hat g (x) = 0$ for $x \in int (K)$, 
where $int(K)$ stands for the interior part of $K$.

Note that the boundary of $S(f)^{\ve}$ is contained in the set $\{ z : d(z,S(f)) = \ve \}$. 
Indeed $z \in \partial S(f)^{\ve}$ if and only if $d(z,S(f)) \geq \ve$ and for every $\eta > 0$ there exists a point $w \in X$ so that 
\[
d(z,w) \leq \eta, \qquad d(w,S(f)) < \ve.
\]
Let $\eta_{n}$ be a sequence converging to $0$ and $w_{n}$ the corresponding sequence converging to $z$. To each $w_{n}$ associate $x_{n} \in S(f)$ so that 
$d(w_{n},x_{n}) < \ve$. Then 
\[
d(z,x_{n}) \leq d(z,w_{n}) + d(w_{n},x_{n}) < \eta_{n} + \ve.
\]
Passing to the limit $d(z, S(f)) \leq \ve$ and therefore necessarily $d(z,S(f)) = \ve$.

Moreover for $\ve \neq \ve'$
\[
\{ z : d(z,S(f)) = \ve \} \cap \{ z : d(z,S(f)) = \ve' \} = \emptyset,
\]
hence there exists at most countably many $\ve$ so that $m(\{ z : d(z,S(f)) = \ve \}) > 0$.
Hence for any $r>0$ there exists $\ve>0$ so that 
\[
m(\{ z : d(z,S(f)) = \ve \}) = 0, \qquad  m(S(f)^{\ve} \setminus S(f)) < r,
\]
where the second expression holds because $S(f)$ is closed.
From what said so far, denoting $g : = f + (\delta/2M)\hat g$ is such that
\[
\| f - g \|_{D^{\infty}(X)} \leq \delta.
\]
To conclude the proof observe that $S(g) \subset S(f)^{\ve} \setminus S(f)$, hence by construction $g \in G_{r}$.
\end{proof}

\end{document}